\documentclass[12pt, fleqn]{article}
\usepackage{authblk}
\usepackage[utf8]{inputenc}
\usepackage[margin=1in]{geometry}
\usepackage{titling}
\usepackage{xcolor}

\usepackage{amsthm}
\usepackage{amssymb}
\usepackage{amsmath}
\usepackage{enumitem}
\numberwithin{equation}{section}
\newtheorem{theorem}{Theorem}[section]
\newtheorem{cor}{Corollary}[theorem]
\newtheorem*{theorem-non}{Theorem}
\newtheorem{lem}[theorem]{Lemma}
\newtheorem{defn}[theorem]{Definition}

\theoremstyle{remark}

\theoremstyle{remark}
\newtheorem*{remark}{Remark}
\newtheorem{rmk}{Remark}

\title{The Clifford Theory for Modular Representations of Finite Groups}
\author{Devjani Basu}

\affil{Department of Mathematics, Southern Illinois University, Carbondale, IL, U.S.A}
\date{} 

\begin{document}
\maketitle



\abstract 
Clifford theory  establishes a relation between the representation theory of a finite group and its normal subgroups. In this paper, we establish the Clifford theory for the modular representations of finite groups. The proofs are based on an explicit analysis of the representation spaces and their decompositions. We also analyze the relation between the modular representations of $SL_2(\mathbb{F}_p)$ in defining characteristic, with that of $GL_2(\mathbb{F}_p)$ using the modular Clifford theory.

\medskip

\noindent Keywords: modular representation theory; Clifford theory; restriction of irreducible representations 

\section*{Introduction}

In 1898, it was known to Frobenius that the restriction of an irreducible ordinary character of a finite group to its normal subgroup is a sum of characters that are \emph{conjugate} to a given one. Here, ordinary characters are the characters defined over a field of characteristic $0$. In 1937, A. Clifford reproved this and extended the results over all fields and for infinite groups in his paper \cite{Clif}. 

In 1907, L.E. Dickson first studied the modular representations; these are the representations over a vector space over a field of nonzero characteristic $p$, such that $p$ divides the order of the group. As a consequence, the semisimplicity of the group algebra is lost; semisimplicity ensures that all representations are semisimple, i.e. are direct sums of irreducible representations. The theory of these representations were later extensively developed by R. Brauer \cite{BN} in 1941. In \cite{navcharacters}, Navarro provides a detailed exposition of this theory in the light of \emph{Brauer characaters} as characters of the modular representation, which are also known as modular characters, including some results on the Clifford theory.

 In this paper, we have developed the Clifford theory for the modular representations of a finite group, employing an explicit analysis of the representation spaces and their decompositions. The techniques are inspired by the expository paper \cite{CT} of Ceccherini-Silberstein, Scarabotti, and Toll, and gives a functional flavor, making it more suitable for applications in a harmonic analytical framework. 
 
Among other applications, this Clifford theory enables us to study the modular representations of finite groups of Lie type, such as $SL_n(\mathbb{F}_q)$, via the representations of $GL_n(\mathbb{F}_q)$, which is the finite group of fixed points of the connected reductive group $GL_n$. This owes to the fact that $SL_n$ is a normal subgroup of $GL_n$.

 In Sec.~1, we present the main definitions and proof the two main theorems of the Clifford theory for modular representations. The first one, Theorem \ref{thm:1}, shows the decomposition of the irreducible representation space of the group into the direct sum of irreducible representation spaces of the normal subgroup, upon restriction. The second main theorem, Theorem \ref{thm:2}, is a stronger version of the first one, strengthened via the \emph{inertia subgroup}.

Then, in Sec.~2, we exhibit how the Clifford theory connects the modular representations of $SL_2(\mathbb{F}_3)$ and $GL_2(\mathbb{F}_3)$ in characteristic $p=3$, using their corresponding modular (or Brauer) character tables. Then we proceed to describe the modular representations of $SL_2(\mathbb{F}_p)$ and  $GL_2(\mathbb{F}_p)$, in defining characteristic, explicitly, as homogeneous polynomial spaces in two variables. While the theorems \ref{thm:22} and \ref{thm: 23} establish the irreducibilty of these vector spaces, the relations \ref{eq:1}, \ref{eq:2}, and \ref{eq:3} establish the correspondence between these representations via the modular Clifford theory.


\section{Modular Clifford Theory}

Let $\mathbb{F}$ be an algebraically closed field of characteristic $p$, where $p$ is a prime.

Let $G$ be a finite group and $N\unlhd G$ be a normal subgroup of $G$ such that $p$ divides the order of $N$ and hence that of $G$.  

Here $\hat{G}$ (respectively $\hat{N}$) denotes the set of all equivalence classes of irreducible modular representations of $G$ (respectively $N$) over $\mathbb{F}$. For $\sigma \in \hat{N}$, let $V_\sigma$ be the representation space of $\sigma$ over $\mathbb{F}$.

The following definitions are parallel to those in \cite{CT} except that all the representations mentioned here are modular in nature.

\begin{defn}
Let $\sigma \in \hat{N}$ be a finite-dimensional representation of $N$. The \textbf{induced representation} $Ind^G_N\sigma$ of $G$ on  $V_\sigma$ of dimension [G:N] is defined as
\begin{equation*}
    Ind^G_N(V_{\sigma})=\{f:G \rightarrow V_{\sigma}|f(gn)=\sigma(n^{-1})f(x), \forall n\in N, g \in G \}
\end{equation*}
where $G$ acts on $V_\sigma$ by left translation.
\end{defn}

It is to be noted that Frobenius reciprocity does not neccesarily hold for modular representations of finite groups. For our purpose, we will heavily rely on Nakayama's formulas. 

\begin{lem}[Nakayama]

\label{Lem:Nak}

 Let $G$ be a finite group and $H \leq G$ be a subgroup of $G$. Let $\hat{G}$ (respectively $\hat{H}$) denotes the set of all equivalence classes of irreducible modular representations of $G$ (respectively $H$) over $\mathbb{F}$. For $\sigma \in \hat{H}$, let $V_\sigma$ be the representation space of $\sigma$, and for $\theta \in \hat{G}$, let $U_\theta$ be the representation space of $\theta$, over $\mathbb{F}$. Then
\begin{align}
    Hom_G(U_{\theta}, Ind_N^GV_{\sigma}) &\cong Hom_H(U_{\theta}, V_{\sigma})\\
     Hom_G(Ind_N^GV_{\sigma}, U_{\theta}) &\cong Hom_H(V_{\sigma}, U_{\theta})
     \end{align}
for $\theta \in \hat{G}$ and $\sigma \in \hat{H}$.
\end{lem}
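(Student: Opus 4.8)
The plan is to treat both isomorphisms as the two Frobenius/Nakayama adjunctions attached to the functions model of induction, and to verify each by hand so that no appeal to semisimplicity (which fails in characteristic $p$) is ever made. In particular, irreducibility of $\theta$ and $\sigma$ plays no role, and I would prove the statements for arbitrary finite-dimensional $\mathbb{F}G$- and $\mathbb{F}H$-modules. Throughout I write $W = V_\sigma$, interpret the $G$-action on $\mathrm{Ind}_H^G W$ as left translation $(g'f)(x) = f(g'^{-1}x)$, and (reading $N$ in the displayed formulas as a typo for $H$) use the defining relation $f(xh) = \sigma(h^{-1})f(x)$.

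For the first isomorphism I would send a $G$-map $\Phi \colon U_\theta \to \mathrm{Ind}_H^G W$ to its evaluation at the identity, $\phi(u) = \Phi(u)(1_G)$. The computation $\phi(hu) = \Phi(u)(h^{-1}) = \sigma(h)\Phi(u)(1_G) = \sigma(h)\phi(u)$, whose middle step is exactly the defining relation of $\mathrm{Ind}_H^G W$, shows $\phi \in \mathrm{Hom}_H(U_\theta, W)$. The inverse sends an $H$-map $\phi$ to the assignment $\Phi(u)(g) = \phi(g^{-1}u)$; one checks that $\Phi(u)$ satisfies the induction relation, that $\Phi$ is $G$-equivariant, and that the two passages are mutually inverse. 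This direction is essentially routine.

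The second isomorphism carries the real content, since the functions model is the coinduction model and formula $(1.2)$ is the left-adjoint statement, which holds only because $[G:H]$ is finite. The key device is the $H$-equivariant embedding $\eta \colon W \to \mathrm{Ind}_H^G W$ sending $w$ to the function supported on $H$ with $\eta(w)(h) = \sigma(h^{-1})w$. Fixing coset representatives $g_1, \dots, g_m$ of $G/H$, the induction relation yields the decomposition $f = \sum_i g_i\,\eta(f(g_i))$ for every $f \in \mathrm{Ind}_H^G W$. I would then define the correspondence by $\Psi \mapsto \Psi \circ \eta$ in one direction, and for an $H$-map $\psi \colon W \to U_\theta$ by $\Psi(f) = \sum_i g_i\,\psi(f(g_i))$ in the other.

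The main obstacle, and the step on which I would spend the most care, is verifying that $\Psi(f) = \sum_i g_i\,\psi(f(g_i))$ is independent of the chosen coset representatives and is genuinely $G$-equivariant. Independence uses the $H$-equivariance of $\psi$ together with $f(g_i h) = \sigma(h^{-1})f(g_i)$, while equivariance follows because left multiplication by $g'$ merely permutes the cosets, giving $\Psi(g'f) = g'\sum_i (g'^{-1}g_i)\,\psi(f(g'^{-1}g_i)) = g'\Psi(f)$ once independence is in hand. That the two assignments are mutually inverse is then immediate from the decomposition $f = \sum_i g_i\,\eta(f(g_i))$ and from $\Psi \circ \eta = \psi$. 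Since every computation here is purely module-theoretic, the argument is insensitive to $\mathrm{char}\,\mathbb{F} = p$, which is precisely why Nakayama's relations persist in the modular setting even though Frobenius reciprocity in its character-theoretic form does not.
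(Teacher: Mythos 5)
Your proof is correct, but there is essentially nothing in the paper to compare it against: the paper offers no proof of this lemma at all. It is stated as a background fact (``we will heavily rely on Nakayama's formulas'') and used thereafter, with the justification implicitly deferred to the literature. Your proposal therefore supplies what the paper omits, and it does so in the paper's own functions model of $Ind_H^G V_\sigma$, which is the right choice. Three features of your argument add genuine value. First, you correctly treat the $N$ appearing in the displayed formulas as a typo for $H$. Second, you identify precisely where finiteness of $[G:H]$ enters: the functions model is the coinduction model, so your evaluation-at-the-identity map proves the first isomorphism as the formal adjunction (valid for any subgroup and any characteristic), whereas the second isomorphism is the opposite-sided adjunction and holds only because induction and coinduction agree in finite index; your embedding $\eta$ and the decomposition $f=\sum_i g_i\,\eta(f(g_i))$ are exactly the proof of that agreement. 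Third, your remark that irreducibility of $\sigma$ and $\theta$ plays no role is not cosmetic: the paper later invokes the lemma with reducible arguments, e.g.\ $Hom_G(Ind_N^G\sigma, Ind_N^G\sigma)\cong Hom_N(\sigma, Res_N^G Ind_N^G\sigma)$ in the proof of Theorem \ref{thm:1}(2), where $Ind_N^G\sigma$ need not be irreducible, so the arbitrary-module version you prove is the one the paper actually uses, while the statement as literally given (only for $\sigma\in\hat{H}$ and $\theta\in\hat{G}$ irreducible) would not cover those applications. The verifications you defer (that $\eta(w)$ satisfies the induction relation, well-definedness of your $\Psi$, and the mutual inversions) are routine and go through exactly as you sketch them.
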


 Let $\sigma < \rho$ denotes that $\sigma$ is a subrepresentation of $\rho$. 
\begin{defn}
\label{defn: Ghat}
For $\sigma \in \hat{N}$, $\hat{G}(\sigma)$ is defined as the set
\begin{center}
     $\hat{G}(\sigma)=\{\theta \in \hat{G} \ | \ V_{\sigma} \subseteq Res_{N}^{G}(U_{\theta})\}$
\end{center}
where $U_\theta$ be the representation space of $\theta$ over $\mathbb{F}$.
\end{defn}

\begin{defn}
For a fixed $g\in G$, the \textbf{g-conjugate of} $\mathbf{\sigma}$ in $\hat{N}$, denoted by $^{g}\sigma$, is defined as
\begin{equation}
\label{conjugacy}
^{g}\sigma(n)=\sigma(g^{-1}ng) 
\end{equation}
for all  $n \in N$.
\end{defn}

Here, the definition (\ref{conjugacy}) defines an action of $G$ on $\hat{N}$, such that for all $g_1$, $g_2$ $\in$ $G$ and $n \in N$,
\begin{center}
    $^{g_1g_2}\sigma(n)=\sigma(g_1g_2ng^{-1}_2g^{-1}_1)=^{g_1}\sigma(g_2ng^{-1}_2)=^{g_1}(^{g_2}\sigma(n)$),
\end{center}
i.e.
\begin{center}
    $^{g_1g_2}\sigma=^{g_1}(^{g_2}\sigma)$.
\end{center}

\noindent Let $\sigma \sim \rho$ denotes that the representations are equivalent, and $\{V_{\sigma}\}$ the equivalence class of representation spaces.

\begin{defn}
\label{inertia}
For $\sigma \in \hat{N}$, the \textbf{inertia group} of $\sigma$ is defined by the subgroup
\begin{center}
    $I_{G}(\sigma)=\{g\in G \ | \ ^{g}\sigma\sim \sigma\}<G$
\end{center}
i.e. for $g \in I_{G}(\sigma)$, $^{g}\{V_\sigma\} = \{V_{^{g}\sigma}\} =\{V_\sigma \}$.
\end{defn}

\begin{remark}
$I_G(\sigma)$ is the stabilizer of $\sigma$ in $G$. Furthermore, if $n_1$, $n$ $\in N$, we have
\begin{center}
    $^{n_1}\sigma(n)=\sigma(n_1)^{-1}\sigma(n)\sigma(n_1)$\\
    i.e. $^{n_1}\sigma\sim\sigma$.
\end{center}
Therefore, $N$ is normal in $I_G(\sigma)$ and $[G:I_G(\sigma)]$ is the number of $g$-conjugates of $\theta$ for all $g \in G$.
\end{remark}

Let $R$ be the set of representatives for the left $I_G(\sigma)$ cosets in $G$. Then, we have the coset decomposition
\begin{center}
    $G=\sqcup_{r\in R}rI_G(\sigma)$
\end{center}
and
\begin{center}
    $\{^g\sigma:g\in G\} = \{^r\sigma:r\in R\}$
\end{center}
where the represenation $^r\sigma$'s are pairwise nonequivalent. Let $Q$ be the set of representatives of the left $N$-cosets in $I_G(\sigma)$, such that 
\begin{center}
    $I_G(\sigma)=\sqcup_{q\in Q}qN$.
\end{center}
Then the coset decomposition of $G$ over $N$ is
\begin{center}
    $G=\sqcup_{r\in R}rI_G(\sigma)=\sqcup_{r\in R}\sqcup_{q\in Q}rqN=\sqcup_{t\in T}tN$
\end{center}
where $T=RQ$.

These definitions lead us to the first main theorem of this section on the decomposition of  $Res_N^G U_{\theta}$. We will often use $Res_N^G\theta$ for $Res_N^G U_{\theta}$ in this paper.

\begin{theorem}
\label{thm:1}
For $\sigma \in \hat{N}$, let $d$ be the index of $N$ in $I_G(\sigma)$. For $\theta\in \hat{G}(\sigma)$, let $\ell$ be the multiplicity of $\sigma$ in $Res_N^G\theta$, then we have
\begin{enumerate}[leftmargin=*]
    \item  The decomposition of $Res^G_N(Ind^G_N\sigma)$ into irreducible inequivalent subrepresentations is given by
    \begin{equation}
    Res^G_N(Ind^G_N\sigma)=\bigoplus_{t\in T} {^t\sigma}=d\bigoplus_{r\in R} {^r\sigma}
    \end{equation}
where $T$ and $R$ are as above.
    \item 
    $Hom_G(Ind_N^G\sigma, Ind_N^G\sigma)\cong \mathbb{F}^d$
    
    \item The decomposition of $Res_N^G\theta$ is given by
    \begin{equation}
        Res_N^G\theta = \ell\bigoplus_{r\in R}{^r\sigma}.
    \end{equation}
\end{enumerate}
\end{theorem}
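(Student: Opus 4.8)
The plan is to treat the three parts in order: prove (1) by an explicit analysis of the induced space, deduce (2) from (1) together with Nakayama's formula, and establish (3) as a modular version of Clifford's theorem. For (1), I would first realize $Res_N^G(Ind_N^G\sigma)$ concretely using the decomposition $G=\sqcup_{t\in T}tN$. For each $t\in T$, let $\Phi_t\subseteq Ind_N^G(V_\sigma)$ be the subspace of functions supported on the single coset $tN$. Since $N$ is normal, left translation by $N$ fixes each coset $tN$ setwise, so every $\Phi_t$ is $N$-stable and $Ind_N^G(V_\sigma)=\bigoplus_{t\in T}\Phi_t$ as an $N$-module. The evaluation $f\mapsto f(t)$ identifies $\Phi_t$ with $V_\sigma$, and a direct computation $(n_0\cdot f)(t)=f(n_0^{-1}t)=f\big(t\,(t^{-1}n_0^{-1}t)\big)=\sigma(t^{-1}n_0t)f(t)={}^t\sigma(n_0)f(t)$ shows that this identification carries the $N$-action to ${}^t\sigma$. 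Hence $Res_N^G(Ind_N^G\sigma)\cong\bigoplus_{t\in T}{}^t\sigma$, each summand irreducible since conjugation by $t$ is an automorphism of $N$. To pass to the second form I would use $T=RQ$ with $|Q|=d=[I_G(\sigma):N]$: for $q\in Q\subseteq I_G(\sigma)$ we have ${}^q\sigma\sim\sigma$, so ${}^{rq}\sigma={}^r({}^q\sigma)\sim{}^r\sigma$, and collecting the $d$ equivalent copies attached to each $r\in R$ yields $d\bigoplus_{r\in R}{}^r\sigma$, the ${}^r\sigma$ being pairwise inequivalent by the choice of $R$.

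For (2), I would apply the second Nakayama adjunction of Lemma \ref{Lem:Nak} — a natural isomorphism valid for an arbitrary $G$-module in the second argument — taking that argument to be $Ind_N^G\sigma$ itself, to obtain $Hom_G(Ind_N^G\sigma, Ind_N^G\sigma)\cong Hom_N(V_\sigma, Res_N^G Ind_N^G\sigma)$. Substituting the decomposition from (1) gives $Hom_N\big(\sigma, d\bigoplus_{r\in R}{}^r\sigma\big)\cong\bigoplus_{r\in R}d\,Hom_N(\sigma,{}^r\sigma)$. Since $\mathbb{F}$ is algebraically closed, Schur's lemma makes each $Hom_N(\sigma,{}^r\sigma)$ equal to $\mathbb{F}$ when ${}^r\sigma\sim\sigma$ and $0$ otherwise; among the representatives in $R$ this occurs only for $r=e$, so the total is $d$ copies of $\mathbb{F}$, i.e. $\mathbb{F}^d$.

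For (3), the plan is the structural Clifford argument, which is characteristic-free. First I would show $Res_N^G\theta$ is semisimple: choosing a minimal nonzero $N$-submodule $W$ of $U_\theta$, each $gW$ is again a simple $N$-submodule (as $n\cdot gW=g(g^{-1}ng)W=gW$ and twisting by an automorphism of $N$ preserves simplicity), so $\sum_{g\in G}gW$ is a nonzero $G$-submodule and hence all of $U_\theta$, exhibiting $Res_N^G\theta$ as a sum of simple $N$-modules. Next, $G$ permutes the isotypic components of $Res_N^G\theta$, carrying the $\tau$-component onto the ${}^g\tau$-component; since $\theta\in\hat{G}(\sigma)$ the $\sigma$-component is nonzero, so the sum of the components indexed by the orbit $\{{}^r\sigma:r\in R\}$ is a nonzero $G$-submodule, forcing every irreducible constituent of $Res_N^G\theta$ to be a conjugate of $\sigma$. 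Finally, for each $r\in R$ the map $u\mapsto ru$ is a vector-space isomorphism from the $\sigma$-isotypic component onto the ${}^r\sigma$-isotypic component intertwining the two $N$-actions through conjugation, so the multiplicities coincide and all equal $\ell$, giving $Res_N^G\theta=\ell\bigoplus_{r\in R}{}^r\sigma$.

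I expect the main obstacle to be the semisimplicity step in (3): in the modular setting the restriction of a simple module is not automatically semisimple, so the orbit argument via $\sum_{g\in G}gW=U_\theta$ must be carried out carefully before the isotypic decomposition and the equal-multiplicity comparison can even be formulated. The other delicate point throughout is that, lacking Frobenius reciprocity and character orthogonality, every multiplicity count must be routed through Nakayama's formula and Schur's lemma rather than through inner products of characters.
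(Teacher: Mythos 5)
Your parts (1) and (2) are essentially the paper's own proof: the paper likewise splits $Ind^G_N V_\sigma$ into the subspaces of functions supported on a single coset $tN$ (its $Z_t$ are your $\Phi_t$), identifies each with $V_\sigma$ (its map $L_t$ is the inverse of your evaluation at $t$), performs the same intertwining computation to recognize ${}^t\sigma$, and regroups via $T=RQ$; and (2) is obtained there, exactly as by you, from the second Nakayama formula applied with $Ind^G_N\sigma$ in the second argument together with the multiplicity count from (1). In part (3), however, you take a genuinely different route. The paper transports multiplicities along conjugation: it constructs a linear isomorphism from $Hom_N(\sigma, Res^G_N\theta)$ to $Hom_N({}^g\sigma, Res^G_N\theta)$ by $S\mapsto\theta(g)S$, concludes that every conjugate ${}^r\sigma$ occurs with the same multiplicity $\ell$, and then argues that no other irreducible can occur because $\theta$ lies inside $Ind^G_N\sigma$, whose restriction was computed in (1). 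You instead run the classical Clifford orbit argument: semisimplicity of $Res^G_N\theta$ from a minimal $N$-submodule $W$ and $\sum_{g\in G}gW=U_\theta$, followed by the comparison of isotypic components via $u\mapsto ru$. Your route costs an extra step but buys precisely what the paper's argument leaves unproved: in the modular setting, the passage from ``the irreducible subrepresentations of $Res^G_N\theta$ are exactly the ${}^r\sigma$, each with multiplicity $\ell$'' to the direct-sum equality $Res^G_N\theta=\ell\bigoplus_{r\in R}{}^r\sigma$ is valid only once one knows the restriction is semisimple, which is not automatic in positive characteristic and is exactly where normality of $N$ must be used. So your version of (3) is the more complete one; the paper's map $S\mapsto\theta(g)S$ is a clean Hom-level shortcut for the equal-multiplicity claim, and it stays within the Hom-space formalism used throughout the paper, but by itself it does not yield the direct-sum decomposition.
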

\begin{proof}
\begin{enumerate}[leftmargin=*]
    \item 
    Let $U$ denote the representation space of $\sigma.$ For $t\in T$, $Z_t$ define the space of all functions $f:G \rightarrow U$ such that
    \begin{center}
        $f(t'n)=\delta_{t,t'}\sigma(n^{-1})f(t)$ 
    \end{center}
where $n\in N$ and $t'\in T$.
Then, we have $Ind^G_NU=\bigoplus_{t\in T}Z_t$.

Suppose the map $L_t:U\rightarrow Z_t$ is given by
\begin{center}
    $[L_tu](t'n)=\delta_{t,t'}\sigma(n^{-1})u$, 
\end{center}
$\forall u \in U$. Then, it is easy to see that $L_t$ is linear due to the linearity of the vector space, and injective because of the irreducibility of $\sigma$. Moreover, for any $f\in Z_t$ such that $f(t)=u\in U$, we have 
\begin{center}
    $[L_tu](t'n)=\delta_{t,t'}\sigma(n^{-1})u=\delta_{t,t'}\sigma(n^{-1})f(t)=f(t'n)$
\end{center}
Therefore, $L_t$ is surjective and hence it is a linear isomorphism.

Furthermore, suppose $\lambda=Ind_N^G\sigma$, then we have
\begin{align*}
 \lambda(n)L_tu](t_1n_1) &=[L_tu](n^{-1}t_1n_1) \\
    &=[L_tu](t_1t_1^{-1}n^{-1}t_1n_1) \\
    &=\delta_{t,t_1}\sigma((t_1^{-1}n^{-1}t_1n_1)^{-1})u \\
   &=\delta_{t,t_1}\sigma(n_1^{-1}t_1^{-1}nt_1)u \\
 & =\delta_{t,t_1}\sigma(n_1^{-1})\sigma(t_1^{-1}nt_1)u \\
  &=\delta_{t,t_1}\sigma(n_1^{-1})[(^{t_1}\sigma(n)u]\\
&  =[(L_t(^{t_1}\sigma(n)u)](t_1n_1)
\end{align*}

for all $u \in U$, $t_1 \in T$, and  $n, n_1 \in N$.
Therefore, $\lambda(n)L_t=L_t{^{t_1}\sigma(n)}$, that is, $L_t$ is an intertwiner inducing an equivalence between $(Res_N^G \lambda, Z_t)$ and $(^t\sigma, U )$.

Hence, we can write, 
\begin{equation*}
    Res^G_N(Ind^G_N \sigma) = \bigoplus_{t\in T} {^t}\sigma=\bigoplus_{r\in R}\bigoplus_{q\in Q} {^{rq}\sigma}=|Q|\bigoplus_{r\in R} {^r\sigma}=d\bigoplus_{r\in R} {^r\sigma},
\end{equation*}
where $Q$ is defined as above and $|Q|=[I_G(\sigma):N]=d$.
\item By (1), the multiplicity of $\sigma \in \hat{N}$ in $Res^G_NInd^G_N \sigma$ is $d$. Therefore, 
    $dim_{\mathbb{F}}Hom_N(\sigma, Res_N^GInd_N^G\sigma)=d$,  i.e. $Hom_N(\sigma, Res_N^GInd_N^G\sigma)\cong \mathbb{F}^d$.

Now, by Lemma \ref{Lem:Nak},
\begin{center}
    $Hom_G(Ind^G_N\sigma, Ind_N^G\sigma) \cong Hom_N(\sigma, Res_N^GInd_N^G\sigma) \cong \mathbb{F}^d$.
\end{center}
\item Let us define a map from $Hom_N(\sigma, Res_N^G\theta)$ to $Hom_N(^g\sigma, Res_N^G\theta)$ by $T\mapsto\theta(g)T$ for $g\in G$.

Note that for $S, T \in Hom_N(\sigma, Res_N^G\theta)$ and a scalar $\alpha$,
\begin{center}
    $(\alpha S+T)(u)=\alpha S(u)+T(u)=\theta(g)S(u)+\theta(g)T(u)=\theta(g)(\alpha S+T)(U)$,
\end{center}
by the application of vector properties and the fact that $\theta$ is an automorphism. Moreover, for $T\in Hom_N(\sigma, Res_N^G\theta)$,
\begin{equation*}
    \theta(g)T(u)=\theta(g)u \implies \theta(g)T(u)-\theta(g)u=0 \implies \theta(g)(T(u)-u)=0 \\
    \implies T(u)=u.
\end{equation*}
Therefore, the above map is linear and injective.

Furthermore, for any $T' \in Hom_N(^g\sigma, Res_N^G\theta)$, taking $T=\theta(g^{-1})T'$, we have 
\begin{center}
    $ T(n \cdot v)=\theta(g^{-1})T'(n\cdot v)=n\theta(g^{-1})T'(v)$.
\end{center}
Hence, we have $T \in Hom_N(\sigma, Res_N^G\theta)$. Also, $T \mapsto \theta(g)T=\theta(g)\theta(g^{-1})T'=T'$, i.e. $T \mapsto T'$. Therefore, the defined map is a linear isomorphism and $^g\sigma$ also has multiplicity $\ell$ in $Res_N^G \theta$.

By lemma \ref{Lem:Nak}, we have that $\theta$ has multiplicity $\ell$  of $\theta$.
Therefore, every irreducible subrepresentation in $Res^G_N\theta$ is an irreducible subrepresentation in $Res^G_NInd^G_N\sigma$. But,  $Res^G_NInd^G_N\sigma$ contains only irreducible representation of the form $^r\sigma$, $ r\in R$. Hence, we have
\begin{equation*}
    Res^G_N\theta=\ell\bigoplus_{r\in R}{^r\sigma}.
\end{equation*}
\end{enumerate}
\end{proof}

\begin{cor}
For any $\sigma \in \hat{N}$, then $Ind^G_N\sigma$ is irreducible if and only if $I_G(\sigma)=N$. Furthermore, for $\sigma_1$, $\sigma_2$ $\in \hat N$ such that $I_G(\sigma_1)=I_G(\sigma_2)=N$, $Ind^G_N\sigma_1 \sim Ind^G_N\sigma_2$ if and only if $\sigma_1$ and $\sigma_2$ are conjugate.
\end{cor}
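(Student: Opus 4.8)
The plan is to derive both statements from Theorem \ref{thm:1}, using the endomorphism algebra of part (2) as the bridge for one half of the first equivalence, and the explicit restriction decomposition of part (1) to control the $N$-module structure everywhere else. Throughout I exploit that $\mathbb{F}$ is algebraically closed, so Schur's lemma gives $Hom_G(U,U)\cong\mathbb{F}$ for any irreducible $U$.

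For the first equivalence, the forward implication is immediate: if $Ind_N^G\sigma$ is irreducible, then Schur's lemma forces $Hom_G(Ind_N^G\sigma, Ind_N^G\sigma)\cong\mathbb{F}$, and comparing with part (2) of Theorem \ref{thm:1}, which identifies this space with $\mathbb{F}^d$ for $d=[I_G(\sigma):N]$, yields $d=1$, i.e. $I_G(\sigma)=N$. For the converse I would \emph{not} argue from $\dim End=1$ alone (see the obstacle below), but work directly with part (1). When $d=1$ we have $Res_N^G(Ind_N^G\sigma)=\bigoplus_{r\in R}{}^r\sigma$, a multiplicity-free direct sum of pairwise inequivalent irreducible $N$-representations, whose summands are exactly the $G$-conjugates of $\sigma$ and are permuted by the $G$-action accordingly. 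Since $I_G(\sigma)=N$, orbit-stabilizer gives the orbit of $\sigma$ full size $[G:N]=|R|$, so $G$ acts transitively on the $N$-isotypic components. Any nonzero $G$-subrepresentation $W$ restricts to a nonzero $N$-submodule of a multiplicity-free semisimple module, hence contains at least one whole summand ${}^r\sigma$; transitivity of the $G$-action then sweeps $W$ across all the summands, forcing $W=Ind_N^G\sigma$. Thus $Ind_N^G\sigma$ is irreducible.

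For the second statement, assume $I_G(\sigma_1)=I_G(\sigma_2)=N$, so both inductions are irreducible by the first part. If $\sigma_2\sim{}^g\sigma_1$, I would exhibit the explicit map $T:Ind_N^G\sigma_1\to Ind_N^G({}^g\sigma_1)$ given by $(Tf)(x)=f(xg)$; using normality of $N$ one checks that $Tf$ satisfies the ${}^g\sigma_1$-equivariance and that $T$ commutes with left translation, so $Ind_N^G\sigma_1\cong Ind_N^G({}^g\sigma_1)\sim Ind_N^G\sigma_2$. Conversely, if $Ind_N^G\sigma_1\sim Ind_N^G\sigma_2$, I restrict to $N$: by part (1) the restrictions are $\bigoplus_{r}{}^r\sigma_1$ and $\bigoplus_{r}{}^r\sigma_2$, and an equivalence of $G$-representations is in particular an $N$-isomorphism, so these multiplicity-free semisimple $N$-modules have the same constituents. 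In particular $\sigma_1$ occurs among the ${}^r\sigma_2$, giving $\sigma_1\sim{}^r\sigma_2$ for some $r$, i.e. $\sigma_1$ and $\sigma_2$ are conjugate. (Equivalently one can route this through Lemma \ref{Lem:Nak}: $Hom_G(Ind_N^G\sigma_1, Ind_N^G\sigma_2)\cong Hom_N(\sigma_1,\bigoplus_{r}{}^r\sigma_2)$, which is nonzero precisely when some ${}^r\sigma_2\sim\sigma_1$.)

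The main obstacle is the converse of the first equivalence. In the modular, non-semisimple setting one must resist concluding irreducibility from $Hom_G(Ind_N^G\sigma, Ind_N^G\sigma)\cong\mathbb{F}$ alone, since a one-dimensional endomorphism algebra need not force irreducibility when $\mathbb{F}G$ fails to be semisimple: an indecomposable module with distinct composition factors can admit only scalar endomorphisms. The safe route, as above, is to lean on the concrete multiplicity-free decomposition of $Res_N^G(Ind_N^G\sigma)$ together with transitivity of the $G$-action on the $N$-isotypic components, which sidesteps both the semisimplicity of the induced module and any appeal to $[G:N]$ being invertible in $\mathbb{F}$.
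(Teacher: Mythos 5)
Your proof is correct, and the place where you depart from the paper is precisely where the paper's own argument is defective. For the direction $I_G(\sigma)=N \Rightarrow Ind^G_N\sigma$ irreducible, the paper argues: $d=1$, so $Hom_G(Ind^G_N\sigma, Ind^G_N\sigma)\cong\mathbb{F}$ by Theorem \ref{thm:1}(2), hence $Ind^G_N\sigma$ is irreducible ``by Schur's lemma.'' That is the \emph{converse} of Schur's lemma, which is exactly the implication your final paragraph warns against: in characteristic $p$ a non-split extension of two non-isomorphic simple modules (e.g.\ the natural two-dimensional $\mathbb{F}S_3$-module when $p=3$, which is uniserial with factors the trivial and sign modules) has endomorphism algebra $\mathbb{F}$ but is not simple, so a one-dimensional endomorphism algebra only yields indecomposability. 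Your substitute argument --- when $d=1$ the restriction $Res^G_N(Ind^G_N\sigma)=\bigoplus_{r\in R}{}^r\sigma$ is semisimple and multiplicity-free, so any nonzero $G$-submodule contains a whole summand, and $G$ permutes the summands transitively, forcing the submodule to be everything --- uses no semisimplicity of $\mathbb{F}G$ and genuinely closes this gap; it is the argument the corollary actually needs. (You also supply the easy direction, irreducible $\Rightarrow d=1$, via the legitimate direction of Schur plus Theorem \ref{thm:1}(2); the paper leaves it implicit.) On the second statement the two treatments differ less: your explicit shift intertwiner $(Tf)(x)=f(xg)$ proves conjugate $\Rightarrow$ equivalent with no irreducibility hypothesis at all, while the paper runs the chain $Hom_G(Ind^G_N\sigma_2,\theta)\cong Hom_N(\sigma_2,\bigoplus_{r\in R}{}^r\sigma_1)$ through Lemma \ref{Lem:Nak} --- essentially your parenthetical Nakayama alternative --- which requires both induced modules to be already known irreducible in order to convert a nonzero Hom into an equivalence. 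In short, the paper's proof is shorter but rests on an implication that is false in the modular setting; yours is longer but valid, and is the version that should stand.
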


\begin{proof}
We have $I_G(\sigma)=N \Rightarrow d=[I_G(\sigma):N]=1$. Now from (2) of the theorem,
$Hom_G(Ind_N^G\sigma, Ind_N^G\sigma)\cong \mathbb{F}^d=\mathbb{F}$, which implies $Ind_G^N\sigma$ is irreducible by Schur's lemma.

Now, from (1) of the theorem, setting $\theta=Ind^G_N\sigma_1$, which is irreducible, we have 
\begin{equation*}
    Res^G_N\theta=\bigoplus_{t\in T}{^t\sigma_1}=\bigoplus_{r\in R}{^r\sigma_1}.
 \end{equation*}
Given, $Ind^G_N(\sigma_1)\sim Ind^G_N(\sigma_2)$, i.e. $\theta \sim Ind^G_N(\sigma_2)$ implies that
\begin{equation*}
    Hom(Ind^G_N\sigma_2, \theta)\cong \mathbb{F} 
    \Leftrightarrow Hom(\sigma_2, Res^G_N\theta)\cong \mathbb{F} 
    \Leftrightarrow Hom(\sigma_2,\bigoplus_{r\in R}{^r\sigma_1} )\cong \mathbb{F}  \Leftrightarrow \sigma_2 \sim  {^r\sigma_1} 
\end{equation*}
for some $r\in R $. Hence, they are conjugates.
\end{proof}

\begin{remark}
The number $\ell= dim_{\mathbb{F}}Hom(\sigma, Res^G_N\theta)$ is called the \textbf{inertia index} of $\theta \in \hat{G}(\sigma)$ with respect to $N$.
\end{remark}

Now,  if $I$ denote the the inertia group  $I_G(\sigma)$ for $\sigma \in \hat{N}$, let $ \hat{I}(\sigma)$ denote the set
\begin{equation*}
   \hat{I}(\sigma)=\{ \phi \in \hat{I} \ | \ W_\phi \subseteq Ind^I_N V_\sigma\}
 \end{equation*}
where $W_\phi$ is the representation space of $\phi \in \hat{I}$, such that $Ind_N^I\sigma$ has the decomposition
\begin{equation}
    Ind^I_N V_\sigma=\bigoplus_{\phi \in \hat{I}(\sigma)} W^{m_{\phi}}_{\phi}  = \bigoplus_{\phi \in \hat{I}(\sigma)} m_{\phi}\phi
\end{equation}
where  $m_\phi$ is the multiplicity of $\phi \in \hat{I}(\sigma)$. Then there exists the follwing decomposition.

\begin{lem}
\label{lem:1}
\begin{enumerate}[leftmargin=*]
    \item  The decomposition of $Ind^G_N\sigma$  into irreducible modular representations of $G$ is
\begin{equation}
    Ind^G_N\sigma=\bigoplus_{\phi \in \hat{I}(\sigma)}m_{\phi}Ind_I^G\phi.
\end{equation}

\item If $\theta \in \hat{G}(\sigma)$, then $\theta= Ind_I^G \phi$ for some unique $\phi \in \hat{I}(\sigma)$.
\end{enumerate}
\end{lem}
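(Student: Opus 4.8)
The plan is to deduce part (1) formally from transitivity and additivity of induction, and to prove part (2) by realizing the Clifford correspondence through $\sigma$-homogeneous components. For part (1), I would first record that induction is transitive, $Ind^G_N=Ind^G_I\circ Ind^I_N$, and additive; transitivity holds over $\mathbb{F}$ in any characteristic (in the function-space model of the Definition it is a change-of-variables identification, equivalently the associativity $\mathbb{F}G\otimes_{\mathbb{F}I}(\mathbb{F}I\otimes_{\mathbb{F}N}V_\sigma)\cong\mathbb{F}G\otimes_{\mathbb{F}N}V_\sigma$). Applying this to the given decomposition $Ind^I_NV_\sigma=\bigoplus_{\phi\in\hat I(\sigma)}m_\phi\phi$ and pulling the sum through $Ind^G_I$ yields
\[ Ind^G_N\sigma=Ind^G_I\Big(\bigoplus_{\phi\in\hat I(\sigma)}m_\phi\phi\Big)=\bigoplus_{\phi\in\hat I(\sigma)}m_\phi\,Ind^G_I\phi, \]
which is the asserted formula; this paragraph is routine.

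For part (2), fix $\theta\in\hat G(\sigma)$. By Theorem \ref{thm:1}(3) the restriction $Res^G_N\theta=\ell\bigoplus_{r\in R}{}^r\sigma$ is semisimple, so its $\sigma$-homogeneous component $W$ (the sum of all $N$-subrepresentations of $U_\theta$ isomorphic to $V_\sigma$) is well defined and satisfies $W\cong\ell\sigma$ as an $N$-representation, whence $\dim_{\mathbb{F}}W=\ell\dim_{\mathbb{F}}\sigma$. The first key point is that $W$ is $I$-invariant: for $g\in G$ the operator $\theta(g)$ carries the $\sigma$-homogeneous component onto the ${}^g\sigma$-homogeneous component, and $g\in I=I_G(\sigma)$ forces ${}^g\sigma\sim\sigma$, so $\theta(g)W=W$. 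Thus $W$ carries a representation $\phi$ of $I$ with $Res^I_N\phi\cong\ell\sigma$, so that $V_\sigma<Res^I_N\phi$ and, once $\phi$ is known irreducible, $\phi\in\hat I(\sigma)$.

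The central step is to identify $Ind^G_I\phi$ with $\theta$. The inclusion $W\hookrightarrow Res^G_IU_\theta$ lies in $Hom_I(W,Res^G_I\theta)$, so by the adjunction form of Lemma \ref{Lem:Nak} it corresponds to a nonzero $G$-morphism $\Psi\colon Ind^G_IW\to\theta$; since $\theta$ is irreducible, $\Psi$ is surjective. A dimension count now closes the gap: Theorem \ref{thm:1}(3) gives $\dim_{\mathbb{F}}\theta=\ell\,[G:I]\dim_{\mathbb{F}}\sigma$, while $\dim_{\mathbb{F}}Ind^G_IW=[G:I]\dim_{\mathbb{F}}W=\ell\,[G:I]\dim_{\mathbb{F}}\sigma$, so $\Psi$ is an isomorphism and $Ind^G_IW\cong\theta$. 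Because induction is exact and multiplies dimension by $[G:I]$, any proper nonzero $I$-subrepresentation of $W$ would induce to a proper nonzero $G$-subrepresentation of $Ind^G_IW\cong\theta$, contradicting irreducibility; hence $\phi$ is irreducible, $\phi\in\hat I(\sigma)$, and $Ind^G_I\phi\cong\theta$.

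Finally, for uniqueness I would show $\phi$ is recovered from $\theta$ intrinsically. Restricting $Ind^G_I\phi$ to $N$ coset by coset (using $N\unlhd G$ and that $R$ represents $G/I$) gives $Res^G_N Ind^G_I\phi=\bigoplus_{r\in R}{}^r(Res^I_N\phi)=\bigoplus_{r\in R}\ell\,{}^r\sigma$, and exhibits the $\sigma$-homogeneous component as exactly the functions supported on the coset $I$, on which the $I$-action is precisely $\phi$; thus $(Ind^G_I\phi)_\sigma\cong\phi$. Hence $\theta\mapsto\theta_\sigma$ inverts $\phi\mapsto Ind^G_I\phi$, forcing $\phi$ to be unique. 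The main obstacle throughout is the absence of semisimplicity for $G$-modules: I cannot abstractly split $\theta$ off $Ind^G_IW$, so the surjection $\Psi$ must be upgraded to an isomorphism by the exact dimension count above, and irreducibility of $\phi$ must be extracted from exactness of induction rather than from complete reducibility. The single input legitimizing the homogeneous-component machinery is precisely Theorem \ref{thm:1}(3), which guarantees that the restriction of an irreducible representation to the normal subgroup $N$ remains semisimple.
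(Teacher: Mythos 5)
Your part (2) is correct and complete, and it takes a genuinely different route from the paper: you construct the Clifford correspondent directly as the $\sigma$-homogeneous component $W$ of $U_\theta$ (legitimate by Theorem~\ref{thm:1}(3)), prove $Ind^G_IW\cong\theta$ by the Nakayama adjunction plus the dimension count $[G:I]\,\ell\dim_{\mathbb{F}}\sigma=\dim_{\mathbb{F}}\theta$, extract irreducibility of $\phi$ from exactness of induction, and recover $\phi$ from $\theta$ via the coset-support description of $Res^G_NInd^G_I\phi$. The paper argues in the opposite logical order: it first proves part (1) by a counting argument --- Theorem~\ref{thm:1}(2) and Lemma~\ref{Lem:Nak} give $\sum_{\phi}m_\phi^2=d=\sum_{\phi,\psi}m_\phi m_\psi\dim Hom_G(Ind^G_I\phi,Ind^G_I\psi)$, forcing $\dim Hom_G(Ind^G_I\phi,Ind^G_I\psi)=\delta_{\phi,\psi}$ --- and then deduces part (2) from part (1) in one line. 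Your version of (2) is self-contained and does not lean on (1), which is a genuine advantage.

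However, your part (1) has a real gap. The lemma claims a decomposition into \emph{irreducible} representations of $G$, so one must show that $Ind^G_I\phi$ is irreducible for \emph{every} $\phi\in\hat{I}(\sigma)$; transitivity and additivity of induction give only the formal identity $Ind^G_N\sigma=\bigoplus_\phi m_\phi Ind^G_I\phi$, and in the modular setting irreducibility of the summands cannot be taken for granted. Your homogeneous-component machinery proves irreducibility of $Ind^G_IW$ only when $W$ already arises as the $\sigma$-component of some given $\theta\in\hat{G}(\sigma)$; nothing in your text shows every $\phi\in\hat{I}(\sigma)$ is of this form, and this is exactly the content of (1) that Theorem~\ref{thm:2} later needs in order to know that $\phi\mapsto Ind^G_I\phi$ lands in $\hat{G}(\sigma)$ at all. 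The gap can be closed with the tools from your own uniqueness paragraph: for any $\phi\in\hat{I}(\sigma)$ one has $Res^G_NInd^G_I\phi=\bigoplus_{r\in R}{}^r(Res^I_N\phi)$ with ${}^r\sigma\not\sim\sigma$ for $r\notin I$, so the $\sigma$-homogeneous component of $Ind^G_I\phi$ is the space of functions supported on $I$, which as an $I$-representation is the irreducible $\phi$; since $G$ permutes the ${}^r\sigma$-components transitively, any nonzero $G$-subrepresentation $M$ meets the $\sigma$-component nontrivially, hence contains all functions supported on $I$, hence contains their $G$-translates, which span $Ind^G_I\phi$; thus $Ind^G_I\phi$ is irreducible. (Pairwise inequivalence for distinct $\phi$ then follows from your identification $(Ind^G_I\phi)_\sigma\cong\phi$.) As written, though, part (1) is asserted rather than proved.
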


\begin{proof}
1. Theorem \ref{thm:1}(2) implies that for $\sigma \in \hat{N}$
\begin{eqnarray*}
  \mathbb{F}^d & \cong & Hom_G(Ind_N^G\sigma, Ind_N^G\sigma) \\
        & = & Hom_G(Ind^G_I Ind_N^I\sigma, Ind^G_I Ind_N^I\sigma) \\
        & = & Hom_I(Ind_N^I\sigma, Res^G_I Ind^G_I Ind_N^I\sigma) \\
        & = & Hom_I(Ind_N^I\sigma, Ind_N^I\sigma),
\end{eqnarray*}
therefore, $I$ is also the inertia group of $\sigma$ in $I$.

Moreover, using (0.4), we have
\begin{eqnarray*}
d & = & dim_{\mathbb{F}} Hom_I(Ind_N^I\sigma, Ind_N^I\sigma) \\
  & = & dim_{\mathbb{F}} Hom_I(\bigoplus_{\phi \in \hat{I}(\sigma)}m_{\phi}\phi, \bigoplus_{\phi \in \hat{I}(\sigma)}m_{\phi}\phi) \\
  & = & dim_{\mathbb{F}}(Mat_{m_{\phi}\times m_{\phi}}(\mathbb{F})) \\
  & = & \sum_{\phi \in \hat{I}(\sigma)} m_\phi^2
\end{eqnarray*}
and also,
\begin{eqnarray*}
 d & = & dim_{\mathbb{F}} Hom_G(Ind^G_I (Ind_N^I)\sigma, Ind^G_I (Ind_N^I)\sigma)\\
 & = & dim_{\mathbb{F}}Hom_G(Ind^G_I (\bigoplus_{\phi \in \hat{I}(\sigma)}m_{\phi}\phi), Ind^G_I(\bigoplus_{\phi \in \hat{I}(\sigma)}m_{\phi}\phi))\\
 & = & dim_{\mathbb{F}}Hom_G(\bigoplus_{\phi \in \hat{I}(\sigma)}m_{\phi}Ind^G_I\phi, \bigoplus_{\phi \in \hat{I}(\sigma)}m_{\phi}Ind^G_I\phi)\\
 \Rightarrow \sum_{\phi \in \hat{I}(\sigma)} m_\phi^2 & = & \sum_{\phi, \psi \in \hat{I}(\sigma)}m_\phi m_\psi dim Hom_G(Ind_N^G \phi, Ind_N^G \psi)
\end{eqnarray*}

since $\phi$, $\psi$ are not equivalent. This implies
\begin{eqnarray*}
dim Hom_G(Ind_N^G \phi, Ind_N^G \psi) = \delta_{\phi, \psi} 
\Rightarrow   Hom_G(Ind_N^G \phi, Ind_N^G \phi) = \mathbb{F}
\end{eqnarray*}
for $\phi \in \hat{I}(\sigma)$. Therefore,
\begin{eqnarray*}
 Ind^G_N\sigma = Ind^G_I(Ind^I_N\sigma)= Ind^G_I(\bigoplus_{\phi \in \hat{I}(\sigma)}m_{\phi}\phi)= \bigoplus_{\phi \in \hat{I}(\sigma)}m_{\phi}Ind^G_I\phi.
\end{eqnarray*}

2. Let $\theta \in \hat{G}(\sigma)$, then by lemma \ref{Lem:Nak} 
\begin{equation*}
\theta  < Ind^G_N (\sigma) \Rightarrow \theta < \bigoplus_{\phi \in \hat{I}(\sigma)}m_{\phi}Ind^G_I\phi \Rightarrow \theta = Ind^G_I\phi
\end{equation*}
for some $\theta \in \hat{I}(\sigma)$, uniquely.
\end{proof}

\begin{theorem}[Clifford correspondence for modular representations]
\label{thm:2}
For $\sigma \in \hat{N}$, let $\hat{I}(\sigma)$ be defined as in Lemma \ref{lem:1}, then the map from $\hat{I}(\sigma)$ to $\hat{G}(\sigma)$, defined by
\begin{equation*}
    \phi \mapsto Ind^G_I(\phi)
\end{equation*}
is a bijection. Furthermore, the inertia index of $\phi \in \hat{I}(\sigma)$ with respect to $N$ as well as that of $Ind^G_I\phi$ is equal to $m_{\phi}$, the multiplicity of $\phi$ in $Ind^I_N(\sigma)$ and 
\begin{equation*}
    Res^I_N\phi=m_{\phi}\sigma.
\end{equation*}
\end{theorem}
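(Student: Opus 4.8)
The plan is to derive the bijection almost entirely from Lemma~\ref{lem:1}, and then to extract the numerical statements by repeated use of Nakayama reciprocity (Lemma~\ref{Lem:Nak}) together with Theorem~\ref{thm:1}(3) applied to the pair $N\unlhd I$ in place of $N\unlhd G$. I would first dispatch well-definedness and bijectivity. The computation carried out inside the proof of Lemma~\ref{lem:1}(1) already gives $Hom_G(Ind^G_I\phi, Ind^G_I\psi)\cong\delta_{\phi,\psi}\mathbb{F}$ for $\phi,\psi\in\hat{I}(\sigma)$; the case $\phi=\psi$ yields, via Schur's lemma, that each $Ind^G_I\phi$ is irreducible, while the case $\phi\neq\psi$ yields that distinct $\phi$ have inequivalent images, i.e.\ injectivity. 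To see that the image lies in $\hat{G}(\sigma)$ and not merely in $\hat{G}$, recall from Lemma~\ref{lem:1}(1) that $Ind^G_I\phi$ is a summand of $Ind^G_N\sigma$; the second formula of Lemma~\ref{Lem:Nak} for $N\unlhd G$ then gives $Hom_N(\sigma, Res^G_N(Ind^G_I\phi))\cong Hom_G(Ind^G_N\sigma, Ind^G_I\phi)\neq 0$, so $\sigma\subseteq Res^G_N(Ind^G_I\phi)$ and $Ind^G_I\phi\in\hat{G}(\sigma)$. Surjectivity is precisely Lemma~\ref{lem:1}(2).

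Next I would compute the inertia index of $\phi$ with respect to $N$, that is $\dim_{\mathbb{F}}Hom_N(\sigma, Res^I_N\phi)$. The second formula of Lemma~\ref{Lem:Nak}, now for the pair $N\unlhd I$, gives $Hom_N(\sigma, Res^I_N\phi)\cong Hom_I(Ind^I_N\sigma, \phi)$. Substituting the decomposition $Ind^I_N\sigma=\bigoplus_{\psi\in\hat{I}(\sigma)}m_\psi\psi$ and using that the $\psi$ are pairwise inequivalent irreducibles, Schur's lemma collapses the right-hand side to $m_\phi\mathbb{F}$, so the inertia index of $\phi$ equals $m_\phi$. A parallel computation over $G$ gives the inertia index of $Ind^G_I\phi$: writing $Ind^G_N\sigma=\bigoplus_{\psi}m_\psi Ind^G_I\psi$ from Lemma~\ref{lem:1}(1) and invoking the orthogonality $Hom_G(Ind^G_I\psi, Ind^G_I\phi)\cong\delta_{\psi,\phi}\mathbb{F}$ recorded above, one obtains $\dim_{\mathbb{F}}Hom_N(\sigma, Res^G_N(Ind^G_I\phi))=m_\phi$ as well.

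Finally, for $Res^I_N\phi=m_\phi\sigma$ I would apply Theorem~\ref{thm:1}(3), but with ambient group $I$ and normal subgroup $N$. Since the inertia index just computed is $m_\phi\geq 1$, we have $\sigma\subseteq Res^I_N\phi$, so $\phi$ qualifies as an element of the analogue of $\hat{G}(\sigma)$ for the pair $N\unlhd I$ and the theorem applies. The proof of Lemma~\ref{lem:1}(1) established that $I$ is its own inertia group of $\sigma$, i.e.\ $I_I(\sigma)=I$; hence $[I:I_I(\sigma)]=1$, the relevant set of coset representatives is a singleton, and the only conjugate of $\sigma$ that can occur is $\sigma$ itself. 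Theorem~\ref{thm:1}(3) therefore reduces to $Res^I_N\phi=\ell\,\sigma$, where $\ell$ is the inertia index of $\phi$ with respect to $N$, already identified as $m_\phi$; this is the desired $Res^I_N\phi=m_\phi\sigma$. The main technical hinge — and the step I expect to require the most care — is exactly this collapse of the $g$-conjugates: it rests on recognizing $I_I(\sigma)=I$, so that the stabilizer computation for $\sigma$ inside $I$ must be in place before Theorem~\ref{thm:1}(3) can be invoked.
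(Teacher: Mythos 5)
Your proposal is correct and follows essentially the same route as the paper: Lemma~\ref{lem:1} for the bijection, Nakayama reciprocity (Lemma~\ref{Lem:Nak}) together with the decomposition $Ind^I_N\sigma=\bigoplus_{\psi}m_\psi\psi$ for both inertia-index computations, and Theorem~\ref{thm:1}(3) applied to the pair $N\unlhd I$ (via $I_I(\sigma)=I$) for $Res^I_N\phi=m_\phi\sigma$. If anything you are more careful than the paper, which never explicitly checks that $Ind^G_I\phi$ lies in $\hat{G}(\sigma)$ and states injectivity loosely; your use of the orthogonality $Hom_G(Ind^G_I\phi,Ind^G_I\psi)\cong\delta_{\phi,\psi}\mathbb{F}$ closes both of those small gaps within the same framework.
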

\begin{proof}Let $\theta_1$, $\theta_2$ $\in \hat{G}(\sigma)$. Then by lemma \ref{Lem:Nak}, we have, 
\begin{equation*}
\theta_1, \theta_2 < Ind^G_N\sigma \Rightarrow \theta_1, \theta_2  < \bigoplus_{\psi\in \hat{I}} Ind^G_I\phi.
\end{equation*}
Then, there exist some $\phi_1, \phi_2 \in \hat{I}(\sigma)$ such that $\theta_1=Ind^G_I\phi_{1}$ and $\theta_2=\phi_{2}$. If $\theta_1 =\theta_2$, it implies that $\phi_1 =\phi_2$. Hence, the map is one-to-one.

Now, by Lemma \ref{lem:1} for any $\theta \in \hat{G}(\sigma)$, there exists a unique $\phi \in \hat{I}(\sigma)$ such that $\theta = Ind^{G}_I\phi$. Therefore, the map is onto and the map $\phi \mapsto Ind^G_I\phi$ is a bijection.

The inertia index of $\phi \in \hat{I}(\sigma)$ with respect to $N$ is given by $dimHom_N(\sigma, Res^{I}_N)\phi)$. By lemma \ref{Lem:Nak}, we have,
\begin{eqnarray*}
dimHom_N(\sigma, Res^{I}_N\phi) & = & dimHom_N(Ind^I_N\sigma, \phi)\\
& = & dimHom_N(\bigoplus_{\phi \in \hat{I}(\sigma)}m_{\phi}\phi, \phi)\\
& = & m_{\phi}dimHom_N(\phi, \phi)\\
& = & m_{\phi}.
\end{eqnarray*}
Furthermore, the inertia index of $Ind^G_I\phi \in \hat{G}(\sigma)$ with respect to N is $m_{\phi}$, since
\begin{eqnarray*}
dimHom_N(\sigma, Res^{I}_NInd^G_I\phi) & = & dimHom_N(Ind^I_N\sigma, Ind^G_I\phi)\\
& = & dimHom_N(\bigoplus_{\phi \in \hat{I}(\sigma)}m_{\phi}Ind^G_I\phi, Ind^G_I\phi)\\
& = & m_{\phi}.
\end{eqnarray*}
Now replacing $G$ with $I$ and $\theta$ with $\phi \in \hat{I}(\sigma)$ in Theorem \ref{thm:1}(3), we get
\begin{equation*}
 Res^I_N\phi \cong  m_{\phi}\sigma.     
\end{equation*}
 \end{proof}

In this case, $\phi$ can be termed as the \textbf{Clifford correspondent} of $Ind^G_{I(\sigma)}\phi$ over $\sigma$.
\begin{remark}
If $I_G(\sigma)=G$, i.e., $^{g}\sigma \sim 
\sigma$ for all $g \in G$, then for any $\theta\in \widehat{G}(\sigma)$
\begin{equation*}
    \ell = dimHom_N(\sigma, Res^G_N\theta)=1 \implies Res^G_N \theta = \sigma.
\end{equation*}
\end{remark}

\noindent Let $\widehat{G/N}$ denotes the set of all equivalence classes of irreducible modular representations of $G/N$.

\begin{defn}
The \textbf{inflation} $\bar{\psi} \in \widehat{G}$ of $\psi \in \widehat{G/N}$ is defined by setting 
\begin{center}
    $\bar{\psi}(g)=\psi(gN)$
\end{center}
for all $g \in G$.
\end{defn}
\begin{rmk}
    If  $\psi \in \widehat{G/N}$  is irreducible, then $\bar{\psi} \in \widehat{G}$ is irreducible.
\end{rmk}    
\begin{rmk}    
    For $\psi, \psi_1, \psi_2 \in \widehat{G/N}$, such that $\psi=\psi_1 \oplus \psi_2$, we have $\bar{\psi} =\bar{\psi_1}\oplus\bar{\psi_2}$.
\end{rmk}

\begin{rmk}
    If $p \nmid [G:N]$, then the irreducible modular representations of $G/N$ are completely reducible. We will be interested in the case when $p | [G:N]$, as this affects the semisimplicity of the representation space of $G/N$.
\end{rmk}

\begin{theorem}
\label{thm:inf}
Let $N \unlhd G$ be such that $G/N$ is a $p$-group. Let $\theta \in \widehat{G}$  and $W_{\theta}$ be the representation space of $\theta$ over $\mathbb{F}$. Suppose $\sigma \in \widehat{N}$ is such that $V_\sigma := Res^G_N W_{\theta}$. If $\psi \in \widehat{G/N}$ is a modular representation of ${G/N}$, then
\begin{equation}
    Ind^G_N\sigma=[G:N](\theta \otimes \bar{\psi}).
\end{equation}
Moreover, if $Res^G_N W_\tau=V_\sigma $ for $\tau \in \widehat{G}$, then $\tau \sim \theta \otimes \bar{\psi}$.
\end{theorem}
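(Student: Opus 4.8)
The plan is to first extract the crucial consequence of the hypothesis. Since $G/N$ is a $p$-group and $\mathbb{F}$ has characteristic $p$, the group algebra $\mathbb{F}[G/N]$ is local, so its only irreducible module is the trivial one; thus $\widehat{G/N}=\{\mathbf{1}_{G/N}\}$, forcing $\psi=\mathbf{1}_{G/N}$, its inflation $\bar{\psi}=\mathbf{1}_G$, and $\theta\otimes\bar{\psi}\cong\theta$. Hence both assertions reduce to showing $Ind^G_N\sigma=[G:N]\,\theta$ and that any $\tau\in\widehat{G}$ with $Res^G_N W_\tau=V_\sigma$ is isomorphic to $\theta$. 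I would also record here that $Res^G_N W_\theta=V_\sigma$ is irreducible, so by Schur the inertia index $\ell=\dim Hom_N(\sigma,Res^G_N\theta)=1$, and that for every $g\in G$ the operator $\theta(g)$ intertwines $\sigma$ and ${}^{g}\sigma$ (directly from $\theta(g)\sigma(n)\theta(g)^{-1}=\sigma(gng^{-1})$), so that ${}^{g}\sigma\sim\sigma$ and $I_G(\sigma)=G$.

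For the first equation I would use the tensor identity (projection formula), realized in the paper's functional language by the explicit intertwiner $w\otimes h\mapsto\bigl(g\mapsto h(g)\,\theta(g)^{-1}w\bigr)$, which gives $Ind^G_N(Res^G_N W_\theta)\cong\theta\otimes Ind^G_N(\mathbf{1}_N)$; since $Res^G_N W_\theta=V_\sigma$, the left side is $Ind^G_N\sigma$. Next I would identify $Ind^G_N(\mathbf{1}_N)$ with the inflation to $G$ of the regular representation $\mathbb{F}[G/N]$ (functions on $G/N$ under left translation). Because $G/N$ is a $p$-group in characteristic $p$, every composition factor of $\mathbb{F}[G/N]$ is trivial and there are exactly $[G:N]$ of them; inflating and then tensoring with $\theta$ (an exact operation over a field, with $\mathbf{1}_G\otimes\theta\cong\theta$) shows that every composition factor of $Ind^G_N\sigma$ is isomorphic to $\theta$, with total multiplicity $[G:N]$. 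This is precisely the identity $Ind^G_N\sigma=[G:N]\,\theta$, read in the Grothendieck group of modular $G$-representations.

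For the \emph{moreover} part I would argue that $\widehat{G}(\sigma)=\{\theta\}$. By Lemma \ref{Lem:Nak} any $\tau\in\widehat{G}$ with $V_\sigma\subseteq Res^G_N W_\tau$ satisfies $\tau<Ind^G_N\sigma$, hence occurs among the composition factors computed above, which are all $\theta$; by the definition of $\hat G(\sigma)$ (Definition \ref{defn: Ghat}) this gives $\hat G(\sigma)=\{\theta\}$, so $Res^G_N W_\tau=V_\sigma$ forces $\tau\sim\theta\sim\theta\otimes\bar{\psi}$. I would complement this with the self-contained Schur-scalar argument that also explains the appearance of $\bar{\psi}$: if $Res^G_N\theta=Res^G_N\tau=\sigma$, then for each $g$ the operator $\theta(g)^{-1}\tau(g)$ commutes with $\sigma(N)$, hence equals a scalar $\beta(g)\in\mathbb{F}^{\times}$; the assignment $g\mapsto\beta(g)$ is a homomorphism trivial on $N$, i.e. an element of $Hom(G/N,\mathbb{F}^{\times})$, and since a $p$-group has no nontrivial homomorphism into $\mathbb{F}^{\times}$ in characteristic $p$, we get $\beta\equiv 1$ and $\tau=\theta$.

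The main obstacle is conceptual rather than computational: when $[G:N]>1$ the module $Ind^G_N\sigma\cong\theta\otimes\mathbb{F}[G/N]$ is \emph{not} semisimple, so the symbol $[G:N]\,\theta$ cannot denote a genuine direct-sum decomposition. Indeed $\mathbb{F}[G/N]$ is the projective cover of the trivial $\mathbb{F}[G/N]$-module and has simple head $\mathbf{1}$, whence $\theta\otimes\mathbb{F}[G/N]$ has simple head $\theta$ and is therefore indecomposable; this matches Theorem \ref{thm:1}(2), which identifies $Hom_G(Ind^G_N\sigma,Ind^G_N\sigma)$ with $\mathbb{F}^{d}$ only as a vector space, the endomorphism algebra being in fact local of dimension $d=[G:N]$. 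The care needed is thus to interpret the stated equation at the level of composition factors (Brauer characters) and to justify cleanly both the projection formula and the composition-factor count for $\mathbb{F}[G/N]$; the Clifford-theoretic input $I_G(\sigma)=G$ with inertia index $1$ is what guarantees that $\theta$ is the unique irreducible extending $\sigma$, delivering the uniqueness in the second assertion.
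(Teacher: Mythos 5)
Your proposal is correct and follows essentially the same route as the paper's own proof: the projection formula $Ind^G_N(Res^G_N\theta\otimes\mathbf{1}_N)\cong\theta\otimes Ind^G_N\mathbf{1}_N$, identification of $Ind^G_N\mathbf{1}_N$ with the inflated regular representation of $G/N$, the fact that a $p$-group in characteristic $p$ has only the trivial irreducible (forcing $\psi$ trivial), and Nakayama's formula to place any $\tau$ with $Res^G_N W_\tau=V_\sigma$ inside $Ind^G_N\sigma$. Where you genuinely improve on the paper is in the interpretation of the multiplicity statement: the paper's proof writes the regular representation $\lambda$ of $G/N$ as $\bigoplus_{\psi}m_\psi\psi=[G:N]\psi$, i.e.\ as a direct sum of trivial representations, which is false in characteristic $p$ --- $\mathbb{F}[G/N]$ is indecomposable (the group algebra of a $p$-group is local), and only its \emph{composition factors} are all trivial. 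Your reading of $Ind^G_N\sigma=[G:N](\theta\otimes\bar{\psi})$ in the Grothendieck group (equivalently as an equality of Brauer characters, which is exactly how the paper's own remark ties the result to Green's theorem in Navarro) is the correct one, and your observation that $\theta\otimes\mathbb{F}[G/N]$ has simple head $\theta$, hence is indecomposable with endomorphism algebra local of dimension $[G:N]$, squares this with Theorem \ref{thm:1}(2). Your supplementary Schur-scalar argument for uniqueness (the map $g\mapsto\beta(g)$ defines a homomorphism $G/N\to\mathbb{F}^{\times}$, which must be trivial since $\mathbb{F}^{\times}$ has no $p$-torsion) is a clean self-contained alternative not present in the paper; note only that it uses the hypothesis in the strict form $Res^G_N\tau=Res^G_N\theta$ as operators on the same space $V_\sigma$, which is indeed how the theorem is phrased, whereas the Nakayama route works up to isomorphism.
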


\begin{proof}
Let $\iota_N \in \hat{N}$ be trivial and $\lambda$ be the regular representation of $G/N$. Then we have
\begin{equation*}
    Ind^G_N\sigma=Ind^G_N(\sigma\otimes\iota_N)=Ind^G_N(Res^G_N\theta\otimes\iota_N)=\theta \otimes \bar{\lambda}
\end{equation*}
where $\bar{\lambda}$ is the inflation of $\lambda$.

Now by Prop 2.2.7 in \cite{Sch}, $G/N$ has a unique trivial $p$-modular representation, $\psi \in \widehat{G/N}$, upto isomorphism, i.e., $dim\psi=1$. Hence, if $m_{\psi}$ is the multiplicity of $\psi$, we have 
\begin{equation*}
    \lambda=\bigoplus_{\psi\in \widehat{G/N}}m_{\psi} \psi \implies   \lambda=[G:N]\psi \implies \bar{\lambda}=[G:N]\bar{\psi}.
\end{equation*}
Therefore,
\begin{equation*}
    Ind^G_N \sigma=\theta\otimes([G:N]\bar{\psi})=[G:N](\theta\otimes\bar{\psi}).
\end{equation*}

Now, for any $\tau \in \widehat{G}$ such that $Res^G_N W_\tau= V\sigma$, lemma \ref{Lem:Nak} implies that 
\begin{equation*}
    W_\tau \subseteq Ind^G_N V_\sigma \implies \tau \leq [G:N](\theta \otimes \bar{\psi}) \implies \tau \sim \theta \otimes \bar{\psi}.
\end{equation*}

\end{proof}

\begin{rmk}
The above theorem in terms of Brauer characters, is known as Green's theorem as stated in theorem 8.11 of \cite{navcharacters}.
\end{rmk}

Furthermore, we expect the following two cases to be true:
\begin{enumerate}[label = \roman*]

\item Let $N \unlhd G$ be such that $G/N$ is cyclic. Suppose $\sigma \in \widehat{N}$ is such that $I_G(\sigma) = G$.  Then there exists $\theta \in \widehat{G}$  such that $Res^G_N W_\theta =V_\sigma $, where  $W_{\theta}$ is the representation space of $\theta$ over $\mathbb{F}$. 

\item 
Let $N \unlhd G$ be such that $(|N|, |G:N|) =1$. Suppose $\sigma \in \widehat{N}$ is such that $I_G(\sigma) = G$. Then there exists $\mu \in \widehat{G}$  such that $Res^G_N W_\mu =V_\sigma $, where  $W_{\mu}$ is the representation space of $\mu$ over $\mathbb{F}$. 

\end{enumerate}
\begin{remark}
In the case of Brauer characters, the result ii. is due to E. C. Dade.
\end{remark}

\section{Clifford Correspondence between $SL_2(\mathbb{F}_p)$ and $GL_2(\mathbb{F}_p)$}

In this section, we first look at character-theoretic Cliiford theory for the modular representations of $SL_2(\mathbb{F}_3)$ and $GL_2(\mathbb{F}_3)$ as an example. We then proceed to develop some results for the modular representations of $SL_2(\mathbb{F}_p)$ and $GL_2(\mathbb{F}_p)$.

\subsection{Example: Modular Characters of $SL_2(\mathbb{F}_3)$ and $GL_2(\mathbb{F}_3)$}

In \cite{BN}, the modular characters are only defined for \textit{$p$-regular elements} of a group $G$  (i.e. those elements whose order is prime to $p$). If the group $G=G(\mathbb{F}_q)$ is an affine algebraic group over the finite field $\mathbb{F}_q$ and $q=p^r$, then $p$ is the \emph{defining} characteristic and the most relevant prime for our case. Therefore, for $SL_2(\mathbb{F}_3)$, we will consider $p=3$.

Moreover, by Theorem 2 of \cite{BN}, the number of irreducible Brauer characters of $SL_2(\mathbb{F}_3)$ over the field with characteristic $3$ equals the number of conjugacy classes of $3$-regular elements. These conjugacy classes of $3$-regular elements are represented by the identity matrix $I_2$, as well as $-I_2$, and the matrix $c_4(z) = \begin{pmatrix}
                  x & \Delta y \\
                  y & x \\
                  \end{pmatrix}$,
where $z=x+\sqrt{\Delta} y \in Ker(\mathbb{F}_9 \rightarrow \mathbb{F}_3)$, $z\neq \pm 1$, and $\Delta \in \mathbb{F}^*_3 - \mathbb{F}^*_9$.

The $3$-modular representations are summarized in Table 1.

\begin{table}[h!]
\caption{Modular Character Table of $SL_2(\mathbb{F}_3)$}\label{tab:title}
\begin{center}
    \begin{tabular}{|c|c|c|c|}  \hline   
$g$ & $I_2$ & $-I_2$ & $c_4(z)$\\ \hline
$|Cl_{SL_2(\mathbb{F}_3)}(g)|$ & 1 & 1 & 6 \\ \hline
$o(g)$ & 1 & 2  & 4 \\ \hline
$\sigma_0$ & $1$ & $1$ & $1$ \\ \hline
$\sigma_1$ & 2 & -2 & 0 \\ \hline
$\sigma_2$ & 3 & 3 & -1 \\ \hline
\end{tabular}
\end{center}
\end{table}


We will invoke the Clifford correspondence after inspection of the modular representations of $GL_2(\mathbb{F}_3)$. The conjugacy classes of the $3$-regular elements are represented by the identity matrix $I_2$, $-I_2$, $c_3(1, -1)=  \begin{pmatrix}
                 1  & \\
                     & -1 \\
                  \end{pmatrix}$,
                   $c_4(z)$, $c_4(-z)=-c_4(z)$ and the matrix $c_4(z^2)$. The modular representations corresponding to the six 3-regular conjugacy classes of $GL_2(\mathbb{F}_3)$ are summarized in Table 2.

\begin{table}[h!]
\caption{Modular Character Table of $GL_2(\mathbb{F}_3)$}\label{tab:title}
\begin{center}
    \begin{tabular}{|c|c|c|c|c|c|c|}  \hline   
$g$ & $I_2$ & $-I_2$ & $c_3(1,-1)$ & $c_4(z)$ & $c_4(-z)$ & $c_4(z^2)$\\ \hline
$|Cl_{SL_2(\mathbb{F}_3)}(g)|$ & 1 & 1 & 12 & 6 & 6 & 6 \\ \hline
$\theta_{0,0}$ & $1$ & $1$ & $1$ & $1$ & $1$ & $1$ \\ \hline
$\theta_{0,1}$ & 1 & 1 & -1 & -1 & -1 & 1 \\ \hline   
$\theta_{1,0}$ & 2 & -2 & 0 & $\sqrt{2}i$ &  $-\sqrt{2}i$ & 0 \\ \hline
$\theta_{1,1}$ & 2 & -2 & 0 & $-\sqrt{2}i$ & $\sqrt{2}i$ & 0 \\ \hline
$\theta_{2,0}$ & 3 & 3 & 1 & -1 & -1 & -1 \\ \hline
$\theta_{2,1}$ & 3 & 3 & -1 & 1 & 1 & -1 \\ \hline

\end{tabular}
\end{center}
\end{table}


Then we have for all $k$, the inertia group, $I_G(\sigma_k)=G$. Therefore, we find that $\hat{G}(\sigma_0)=\{\theta_{0,0}, \theta_{0,1}\}$, $\hat{G}(\sigma_1)=\{\theta_{1,0}, \theta_{1,1}\}$, and $\hat{G}(\sigma_2)=\{\theta_{2,0}, \theta_{2,1}\}$. 

Moreover, by theorem \ref{thm: 23}, we see
\begin{eqnarray*}
Ind^G_N\sigma_0 &= & \theta_{0,0} \oplus \theta_{0,1},\\
Ind^G_N\sigma_1 &= & \theta_{1,0}\oplus \theta_{1,1},\\
Ind^G_N\sigma_2 &= & \theta_{2,0} \oplus \theta_{2,1}. \\
\end{eqnarray*}

\subsection{Modular Representations of $SL_2(\mathbb{F}_p)$ and $GL_2(\mathbb{F}_p)$}
Let  $\mathbb{F}$ be the algebraic closure of the finite field $\mathbb{F}_p$ with characteristic $p$. 

This section aims to determine the relation between the irreducible modular representations of $SL_2(\mathbb{F}_p)$ over  $\mathbb{F}$ and that of $GL_2(\mathbb{F}_p)$ using the modular Clifford theory. Some of the main sources of notations, definitions and results for this section are \cite{Bon}, \cite{BN}, \cite{DG}, and \cite{SR}.


Let $G$ denote the group $GL_2(\mathbb{F}_p)$, $N$ its normal subgroup $SL_2(\mathbb{F}_p)$, $B$ the subgroup of upper triangular matrices \big($\begin{smallmatrix}
                a & b\\
                0 & d
\end{smallmatrix}$\big) in $N$, and $U$ the subgroup of matrices of the form \big($\begin{smallmatrix}
                1 & a\\
                0 & 1
\end{smallmatrix}$\big). Note that, $U$ is a $p$-Sylow subgroup of $N$. Setting $ w= \big(\begin{smallmatrix}
                0 & -1\\
                1 & 0
\end{smallmatrix}\big)$, the \textit{Bruhat decomposition} of $N$ is the  partition of $N$ into double cosets for the action of $B$:
\begin{equation*}
    N=B \dot{\cup} UwB.
\end{equation*}

For $k \geq 0$, let $Pol_k$ denote the space of homogeneous polynomials of degree $k$ in two variables $x$ and $y$. The dimension of $Pol_k$ is  $dim\langle\{x^iy^{k-i}|0 \leq i \leq k\}\rangle = k+1$.  For all $g=\big(\begin{smallmatrix}
                a & b\\
                c & d
\end{smallmatrix}\big) \in N$ and $P(x,y) \in Pol_k$, let the representation $\rho$ be defined as
\begin{equation}
  \rho_{Pol_k}(g)\cdot P(x,y)=P(ax+cy, bx+dy). 
\end{equation}

Then we have the following result.

\begin{theorem}
There are $p$ distinct irreducible representations of $SL_2(\mathbb{F}_p)$, upto isomorphism.
\end{theorem}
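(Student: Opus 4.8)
The plan is to prove that the representations $\rho_{Pol_k}$ for $0 \le k \le p-1$ furnish exactly $p$ pairwise non-isomorphic irreducible representations of $SL_2(\mathbb{F}_p)$, and that these exhaust the irreducibles. The starting point is the count already invoked in the excerpt: by Brauer's theorem the number of irreducible $p$-modular representations equals the number of $p$-regular conjugacy classes. So the proof splits into two tasks, namely producing $p$ genuinely distinct irreducibles and matching that number against the class count.

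First I would establish irreducibility of $Pol_k$ for $0 \le k \le p-1$. The natural tool is the action of the subgroup $U$ of unipotent matrices $\bigl(\begin{smallmatrix} 1 & a \\ 0 & 1 \end{smallmatrix}\bigr)$ together with the torus. Restricting $\rho_{Pol_k}$ to the diagonal torus shows that the monomials $x^i y^{k-i}$ are weight vectors with distinct weights (for $k \le p-1$ the exponents do not collapse modulo $p$), so any $N$-subrepresentation is spanned by a subset of these monomials. I would then apply a nonzero $u = \bigl(\begin{smallmatrix} 1 & a \\ 0 & 1 \end{smallmatrix}\bigr)$ to a putative highest (or lowest) weight vector in an invariant subspace and show, by expanding $(x)^i(ax+y)^{k-i}$ or the analogous image under the transpose unipotent, that the whole monomial basis is generated; the key point is that the relevant binomial coefficients $\binom{k}{i}$ are nonzero in $\mathbb{F}$ precisely because $k \le p-1$, so no coefficient vanishes mod $p$. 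This forces any nonzero invariant subspace to be all of $Pol_k$, giving irreducibility.

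Next I would verify these $p$ representations are pairwise non-isomorphic, which is immediate from dimensions: $\dim Pol_k = k+1$ takes the distinct values $1, 2, \dots, p$ as $k$ ranges over $0, \dots, p-1$. Then I would count the $p$-regular conjugacy classes of $SL_2(\mathbb{F}_p)$ to confirm the total is exactly $p$. A $p$-regular element has order prime to $p$, hence is semisimple (diagonalizable over $\overline{\mathbb{F}_p}$); running through the possible eigenvalue configurations — central elements $\pm I$, split semisimple classes with distinct eigenvalues in $\mathbb{F}_p^\times$, and non-split (elliptic) classes with eigenvalues in $\mathbb{F}_{p^2} \setminus \mathbb{F}_p$ — and tallying with the determinant-one constraint should yield $p$ classes. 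Matching this count to the $p$ distinct irreducibles already constructed shows there are no others.

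The main obstacle I expect is the irreducibility argument at the boundary, specifically controlling the binomial coefficients modulo $p$ and handling the two unipotent subgroups symmetrically, since carelessness there either lets coefficients vanish (destroying the generation step) or, if one mistakenly allowed $k \ge p$, would produce a reducible module whose socle and head are the relevant truncations. The cleanest route is probably to show the cyclic module generated by the top monomial $x^k$ under the lower-triangular unipotent is all of $Pol_k$, and dually for $y^k$, so that no proper nonzero invariant subspace can exist; the weight decomposition guarantees any such subspace contains a monomial, and the generation argument then blows it up to everything. The conjugacy-class count is more bookkeeping than obstacle, but I would take care that the elliptic classes are counted up to the Frobenius identification $z \sim z^p$ to avoid double-counting.
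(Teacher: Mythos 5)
The statement itself is proved in the paper by a one-line citation of Brauer--Nesbitt (Theorem 2 of \cite{BN}): the number of irreducible $p$-modular representations equals the number of $p$-regular conjugacy classes, and $SL_2(\mathbb{F}_p)$ has exactly $p$ such classes. So the only content actually needed is the class count, which you sketch correctly in outline (for odd $p$: the two central classes $\pm I$, the $(p-3)/2$ split semisimple classes $\{a,a^{-1}\}$ with $a\neq\pm1$, and the $(p-1)/2$ elliptic classes counted up to $z\sim z^p$, giving $2+(p-3)/2+(p-1)/2=p$; one should also note that these $GL_2$-classes do not split further in $SL_2$ because the relevant centralizers are tori whose determinant, respectively norm, maps onto $\mathbb{F}_p^\times$). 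Your additional construction of the modules $Pol_k$ is logically unnecessary for this statement --- it is the paper's separate Theorem \ref{thm:22} --- but it is a legitimate strengthening: it exhibits the $p$ irreducibles rather than merely counting them.

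However, your irreducibility argument contains a genuine gap. You claim the monomials $x^iy^{k-i}$ have \emph{distinct} torus weights for $k\le p-1$, so that every invariant subspace is spanned by monomials. The weights here are characters of the finite torus $\{\mathrm{diag}(a,a^{-1})\}\cong\mathbb{F}_p^\times$, a group of order $p-1$, under which $x^iy^{k-i}$ transforms by $a\mapsto a^{2i-k}$; these exponents matter only modulo $p-1$, not modulo $p$. Collisions occur as soon as $k\ge (p-1)/2$: for instance in $Pol_{p-1}$ the monomials $x^{p-1}$, $x^{(p-1)/2}y^{(p-1)/2}$, and $y^{p-1}$ all carry the trivial weight, so the weight decomposition does not force an invariant subspace to contain a monomial, and the argument collapses for roughly half of the representations. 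The correct substitute --- and what the paper does in Theorem \ref{thm:22} --- is to use the unipotent subgroup $U$ instead of the torus: $U$ is a $p$-group acting in characteristic $p$, so every nonzero invariant subspace contains a nonzero $U$-fixed vector, and the $U$-fixed space of $Pol_k$ is exactly the line $\langle x^k\rangle$ (rank--nullity applied to $R_{k+1}-I_{k+1}$, which has rank $k$). Once an invariant subspace is known to contain $x^k$, your generation step is fine: applying $w$ and the lower unipotent matrices, the nonvanishing of $\binom{k}{i}$ mod $p$ for $k\le p-1$ together with a Vandermonde argument on the $p$ values of the parameter shows $x^k$ generates all of $Pol_k$. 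In short, the "blow up to everything" half of your plan works, but the "any invariant subspace contains a monomial" half needs the $p$-group fixed-point argument, not weights.
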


\begin{proof}
By Theorem 2 of \cite{BN}.
\end{proof}

\begin{theorem} \label{thm:22}
For $0 \leq k \leq p-1$, the representations $Pol_k$ are all irreducible.
\end{theorem}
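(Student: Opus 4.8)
The plan is to work concretely with the monomial basis $\{x^i y^{k-i} : 0 \le i \le k\}$ of $V := \mathrm{Pol}_k$ and to show that any nonzero $SL_2(\mathbb{F}_p)$-submodule $W \subseteq V$ already equals $V$. Two elementary facts will drive everything, and both rely crucially on the hypothesis $k \le p-1$: first, every binomial coefficient $\binom{k}{j}$ with $0 \le j \le k$ is nonzero in $\mathbb{F}$ (all the integer factors involved are strictly smaller than $p$); second, a polynomial of degree at most $k \le p-1$ in a scalar variable $t$, with coefficients in a vector space over $\mathbb{F}$, can be recovered coefficient-by-coefficient by evaluating at the $p > k$ points of $\mathbb{F}_p$ and inverting a Vandermonde matrix. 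I will exploit the two root subgroups $u^{-}(t) = \left(\begin{smallmatrix} 1 & 0 \\ t & 1 \end{smallmatrix}\right)$ and $u^{+}(s) = \left(\begin{smallmatrix} 1 & s \\ 0 & 1 \end{smallmatrix}\right)$ of $SL_2(\mathbb{F}_p)$, whose actions follow from the defining formula as $\rho(u^{-}(t)) P(x,y) = P(x+ty,\,y)$ and $\rho(u^{+}(s))P(x,y) = P(x,\,sx+y)$.

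First I would push an arbitrary nonzero vector of $W$ down to the lowest monomial $y^k$. Take $0 \ne v = \sum_i a_i x^i y^{k-i} \in W$ and let $N = \max\{i : a_i \neq 0\}$. Applying the lower unipotent gives $\rho(u^{-}(t)) v = \sum_i a_i (x+ty)^i y^{k-i} \in W$ for every $t \in \mathbb{F}_p$. Regarded as a polynomial in $t$ with coefficients in $V$, this has degree exactly $N$, and its leading ($t^N$) coefficient is $a_N y^k$, since only the $i=N$ summand contributes at top degree. Because $N \le p-1$, the Vandermonde extraction over the $p$ points of $\mathbb{F}_p$ shows every coefficient of this polynomial, in particular $a_N y^k$, lies in $W$; as $a_N \neq 0$, we conclude $y^k \in W$.

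Next I would spread $y^k$ across the whole basis using the opposite unipotent. We have $\rho(u^{+}(s)) y^k = (sx+y)^k = \sum_{j=0}^k \binom{k}{j} s^j\, x^j y^{k-j} \in W$ for all $s \in \mathbb{F}_p$. Since the $s$-degree is $k \le p-1$, the same Vandermonde extraction yields $\binom{k}{j} x^j y^{k-j} \in W$ for each $j$, and the nonvanishing of $\binom{k}{j}$ then gives $x^j y^{k-j} \in W$ for all $0 \le j \le k$. Thus $W$ contains the entire monomial basis, so $W = V$, and $\mathrm{Pol}_k$ is irreducible.

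The genuinely load-bearing step is the Vandermonde interpolation combined with the nonvanishing of the binomial coefficients, and this is exactly where $k \le p-1$ cannot be relaxed: for $k \ge p$ the middle binomial coefficients vanish modulo $p$ and interpolation over the $p$ points of $\mathbb{F}_p$ can no longer separate all coefficients, which is precisely the mechanism that makes $\mathrm{Pol}_k$ reducible (for instance the Frobenius-twisted submodule $\langle x^p, y^p\rangle$ sitting inside $\mathrm{Pol}_p$, using $(ax+cy)^p = a^p x^p + c^p y^p$). The only bookkeeping I would verify carefully is that the $t^N$-coefficient is a pure multiple of $y^k$ with no contamination from lower-index terms, and that both unipotents indeed lie in $SL_2(\mathbb{F}_p)$; neither presents any real difficulty.
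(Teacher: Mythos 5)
Your proof is correct, but it takes a genuinely different route from the paper's. The paper works with a single unipotent $u=\left(\begin{smallmatrix}1&1\\0&1\end{smallmatrix}\right)$ generating the Sylow $p$-subgroup $U$: it computes the matrix $R_{k+1}$ of $u$ on the monomial basis, observes $R_{k+1}-I_{k+1}$ has rank $k$ (its superdiagonal entries $1,2,\dots,k$ are nonzero mod $p$ precisely because $k\le p-1$), so the $U$-fixed space is the line $\langle x^k\rangle$; since $U$ is a $p$-group in characteristic $p$, this forces $Pol_k|_U$ to be uniserial, whence every nonzero subrepresentation contains $x^k$ and only the full space contains $y^k$; finally the Weyl element $w$ sends $x^k\mapsto y^k$, so any nonzero $SL_2$-submodule must be everything. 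You instead use both root subgroups $u^-(t)$ and $u^+(s)$ together with Vandermonde interpolation over the $p$ points of $\mathbb{F}_p$: from an arbitrary nonzero vector you extract the leading coefficient $a_N y^k$, then from $(sx+y)^k$ you extract each $\binom{k}{j}x^jy^{k-j}$, using the nonvanishing of $\binom{k}{j}$ mod $p$. Your version is self-contained linear algebra — it needs neither the fixed-point/uniserial theory of $p$-group modules nor the Weyl element, and it directly exhibits that any nonzero submodule contains the whole monomial basis. The paper's version buys more structural information along the way: the explicit uniserial $U$-composition series $\langle x^k\rangle < \langle x^k, x^{k-1}y\rangle < \cdots < Pol_k|_U$ and the uniqueness of the $U$-fixed line, which are of independent interest. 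Both arguments hinge on the same arithmetic fact, the nonvanishing of binomial coefficients mod $p$ in the range $k\le p-1$, surfacing as the superdiagonal of $R_{k+1}$ in the paper and as the coefficients of $(sx+y)^k$ plus the invertible Vandermonde matrix in your argument; your closing remark correctly identifies this as the exact point of failure for $k\ge p$.
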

\begin{proof}
We choose $u=\big(\begin{smallmatrix}
                1 & 1\\
                0 & 1
\end{smallmatrix}\big) \in U$. On the basis $\{x^iy^{k-i}|0 \leq i \leq k\}$, $u$ is represented by the matrix $R_{k+1}$, whose $(i,j)^{th}$ entry is the binomial coefficient \big($\begin{smallmatrix}
                j-1\\
                i-1
\end{smallmatrix}$\big) mod p, i.e.
\begin{center}
    $R_{k+1}=\begin{pmatrix}
    1 & 1 & 1 &  &  & \\
      & 1 & 2 &  & \ast  &  \\
      &   &  1 & 3 & &\\
      & & & 1 & \ddots & \\
      & & & & \ddots & k \\
      & & & & & 1
    \end{pmatrix}$
\end{center}
Note that the matrix $R_{k+1}-I_{k+1}$ has rank $k$. Then, by the rank - nullity theorem, the fixed points of $R_{k+1}$ form a subspace of dimension 1. Notice that all irreducible representations of $U$ are $1$-dimensional, since $U$ is abelian, and $U$ acts trivially on them. Therefore, $Pol_k|_U$, as a representation of $U$, has a unique minimal subrepresentation and so does any quotient space of $Pol_k|_U$. Hence, we have the unique uniserial composition series
\begin{equation*}
    \langle x^k\rangle < \langle x^k, x^{k-1}y \rangle < \cdots < \langle x^k, x^{k-1}y, \cdots, y^k \rangle = Pol_k|_U
\end{equation*}

Moreover, every non-zero subrepresentation of $Pol_k|_U$ contains $x^k$, but no proper subrepresentation contains $y^k$. Further,  we see, $x^k \cdot w \mapsto y^k$. Hence, no subrepresentation of $Pol_k$ contains $x^k$ without containing $y^k$, i.e, $Pol_k$ is irreducible.    
\end{proof}


Let $Pol_k(r)$, $r \in \mathbb{Z}$, be the same space of polynomial such that $g= \big(\begin{smallmatrix}
                a & b\\
                c & d
\end{smallmatrix}\big) \in G$ acts on a polynomial $P(x,y)$ as:
\begin{equation*}
    \rho_{Pol_k(r)} (g) \cdot P(x,y)= P(ax+cy, bx+dy)\otimes det(g)^r.
\end{equation*}
The following statement is proved in \cite{SR} for  $GL_2(\mathbb{F}_q)$, but we will restate it for $Gl_2(\mathbb{F}_p)$.

\begin{theorem} \label{thm: 23}
The irreducible representations of $GL_2(\mathbb{F}_p)$ are exactly the representations $Pol_k(r)$ with $0 \leq k \leq p-1$ and $0 \leq r \leq  p-2$.
\end{theorem}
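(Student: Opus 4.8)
The plan is to combine the explicit polynomial models of Theorem \ref{thm:22} with the modular Clifford theory of Section 1, applied to $N=SL_2(\mathbb{F}_p)\unlhd G=GL_2(\mathbb{F}_p)$ with quotient $G/N\cong\mathbb{F}_p^*$ of order $p-1$, which is coprime to $p$. First I would record the inputs that make the machinery run. Since $p\nmid[G:N]$, the algebra $\mathbb{F}[G/N]$ is semisimple and $G/N$ is cyclic, so $\widehat{G/N}$ consists of exactly the $p-1$ distinct one-dimensional characters $\det^{r}$, $0\le r\le p-2$, and the regular representation decomposes as $\lambda=\bigoplus_{r=0}^{p-2}\det^{r}$. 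On the subgroup side, the irreducibles of $N$ are precisely $Pol_0,\dots,Pol_{p-1}$ ($p$ of them).

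Next I would observe that each $\sigma=Pol_k$ extends to $G$, since the polynomial action $\rho_{Pol_k(0)}$ is defined on all of $GL_2(\mathbb{F}_p)$ and restricts to $Pol_k$ on $N$. Because $\sigma$ extends, it is $G$-invariant (conjugation is realized by the intertwiner $\rho_{Pol_k(0)}(g)$), so $I_G(\sigma)=G$ and $d=[I_G(\sigma):N]=p-1$. Moreover, for every $r$ the twist $Pol_k(r)=Pol_k\otimes\det^{r}$ satisfies $Res^G_N Pol_k(r)=Pol_k$, which is irreducible over $N$, forcing $Pol_k(r)$ to be irreducible over $G$ with inertia index $1$. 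Applying the projection-formula computation from the proof of Theorem \ref{thm:inf}, namely $Ind^G_N(Res^G_N\theta_0)=\theta_0\otimes\bar{\lambda}$ with $\theta_0=Pol_k(0)$, but now with $\bar{\lambda}=\bigoplus_{r}\det^{r}$ since $G/N$ is a $p'$-group, yields
\[
Ind^G_N Pol_k=\theta_0\otimes\bar{\lambda}=\bigoplus_{r=0}^{p-2}Pol_k(r).
\]

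The crux is the distinctness of the $Pol_k(r)$ for fixed $k$, and this is the step I expect to be the main obstacle: the naive route via central characters fails, because $aI$ acts on $Pol_k(r)$ by $a^{k+2r}$ and $\gcd(2,p-1)=2$ makes this character agree for $r$ and $r+(p-1)/2$. Instead I would extract distinctness from Theorem \ref{thm:1}(2), which gives $\dim_{\mathbb{F}}End_G(Ind^G_N Pol_k)=d=p-1$. Grouping the $p-1$ irreducible summands of $\bigoplus_{r}Pol_k(r)$ into isomorphism classes with multiplicities $n_j$, Schur's lemma over the algebraically closed $\mathbb{F}$ gives $\sum_j n_j^{2}=p-1$, while $\sum_j n_j=p-1$ simply counts the summands; since $\sum_j n_j^{2}\ge\sum_j n_j$ with equality forcing every $n_j=1$, the representations $Pol_k(0),\dots,Pol_k(p-2)$ are pairwise inequivalent. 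Representations with different $k$ have distinct dimension $k+1\in\{1,\dots,p\}$, so altogether the $p(p-1)$ representations $Pol_k(r)$ are distinct irreducibles.

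Finally I would close the count. Every $\theta\in\widehat{G}$ restricts to a nonzero $N$-representation, hence contains some $Pol_k$, so $\widehat{G}=\bigsqcup_k\widehat{G}(Pol_k)$. By Theorem 2 of \cite{BN}, $|\widehat{G}|$ equals the number of $p$-regular (equivalently semisimple) conjugacy classes of $GL_2(\mathbb{F}_p)$, which I would enumerate as the $p-1$ scalar classes, the $\binom{p-1}{2}$ split regular classes with distinct eigenvalues in $\mathbb{F}_p^*$, and the $(p^2-p)/2$ elliptic classes with Frobenius-conjugate eigenvalues in $\mathbb{F}_{p^2}^*\setminus\mathbb{F}_p^*$, totalling $(p-1)+\tfrac{(p-1)(p-2)}{2}+\tfrac{p(p-1)}{2}=p(p-1)$. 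Since the $p(p-1)$ distinct representations $Pol_k(r)$ already lie in $\widehat{G}$, they must exhaust it, which is the assertion.
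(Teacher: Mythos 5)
Your proposal is correct, but it cannot be compared step-by-step with the paper's argument for a simple reason: the paper gives no proof of Theorem \ref{thm: 23} at all, stating it as a restatement of a result proved in \cite{SR} for $GL_2(\mathbb{F}_q)$. Your argument is therefore a genuinely different (and self-contained) route, built entirely from the paper's own toolkit, and each step checks out: the identification of $\widehat{G/N}$ with the $p-1$ characters $\det^r$ (cyclic $p'$-quotient over an algebraically closed field of characteristic $p$); the fact that $Pol_k$ extends to $G$, so $I_G(Pol_k)=G$ and $d=p-1$; the projection formula $Ind^G_N(Res^G_N\theta_0)\cong\theta_0\otimes\bar{\lambda}$ — the same identity invoked in the proof of Theorem \ref{thm:inf}, except that here $G/N$ is a $p'$-group so $\bar{\lambda}$ is semisimple — giving $Ind^G_N Pol_k\cong\bigoplus_{r=0}^{p-2}Pol_k(r)$; the inequivalence of the summands via $dim_{\mathbb{F}}Hom_G(Ind^G_N Pol_k,Ind^G_N Pol_k)=d=p-1$ (Theorem \ref{thm:1}(2)) together with Schur's lemma, where $\sum_j n_j^2=\sum_j n_j=p-1$ forces all $n_j=1$; and the closing count of $p$-regular (i.e.\ semisimple) classes of $GL_2(\mathbb{F}_p)$, which is indeed $(p-1)+\tfrac{(p-1)(p-2)}{2}+\tfrac{p(p-1)}{2}=p(p-1)$, matching the Brauer--Nesbitt count of $|\widehat{G}|$. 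Your observation that central characters cannot separate $Pol_k(r)$ from $Pol_k(r+(p-1)/2)$ for odd $p$ is accurate and correctly motivates the detour through the endomorphism algebra rather than the naive twisting argument. The comparison of the two routes is instructive: the paper imports the classification from \cite{SR} and then uses Clifford theory to derive the relations (\ref{eq:1}), (\ref{eq:2}), and (\ref{eq:3}) as consequences, whereas you reverse the logical order — you first establish the decomposition of $Ind^G_N Pol_k$ by the projection formula and only then deduce exhaustion by counting, so that (\ref{eq:2}) and (\ref{eq:3}) drop out as byproducts. What your approach buys is that Section 2 becomes logically independent of \cite{SR} and internally powered by Theorem \ref{thm:1} and Theorem \ref{thm:22}; what the citation buys the paper is brevity and coverage of general $\mathbb{F}_q$, where your dimension-based identification of $\widehat{N}$ and your class enumeration would both have to be redone (the irreducibles of $SL_2(\mathbb{F}_q)$ involve Frobenius twists and Steinberg tensor products, not just the $Pol_k$).
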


By definition of the representation space $Pol_k$,  $k \in \{0,1, \dots p-1\}$, for any $P(x,y) \in Pol_k$, we have $\rho_{Pol_k} (g) \cdot P(x,y) \in Pol_k$, for all $g \in G$. Therefore, the inertia group of  the representation $Pol_k$ is 
\begin{equation*}
I_G(Pol_k)=G
\end{equation*}
and by definition \ref{defn: Ghat} we have 
\begin{equation} \label{eq: 2}
    \hat{G}(Pol_k)=\{Pol_k(0), \dots, Pol_k(p-2)\}
\end{equation}
Furthermore, we have $[G:N] =(p-1)$. Hence for $0 \leq k \leq p-1$, theorem \ref{thm:1}(1) implies that
\begin{equation} \label{eq:1}
Res^G_N (Ind^G_N Pol_k) =(p-1)Pol_k,
\end{equation}
and theorem \ref{thm:1}(3) as well as the Clifford correspondence implies that
\begin{equation} \label{eq:2}
Res^G_N Pol_k(r) = Pol_k.
\end{equation}

Finally, by lemma \ref{lem:1} we have the following decomposition,
\begin{equation} \label{eq:3}
Ind^G_N Pol_k =  \bigoplus_{r=0}^{p-2}Pol_k(r), \ 0 \leq k \leq p-1.
\end{equation}


\begin{small}
\noindent \textbf{Acknowledgements} The author is indebted to Kwangho Choiy for his suggestions and valuale comments.
\end{small}





\end{document}